
\documentclass{article}
\usepackage{amsmath}
\usepackage{amssymb}

\setcounter{MaxMatrixCols}{10}

\newtheorem{theorem}{Theorem}

\newtheorem{corollary}[theorem]{Corollary}

\newtheorem{lemma}[theorem]{Lemma}

\newenvironment{proof}[1][Proof]{\textbf{#1.} }{\ \rule{0.5em}{0.5em}}
\setlength{\textwidth}{6.7in} \setlength{\oddsidemargin}{0.0in} \setlength{\textheight}{8.5in} \setlength{\topmargin}{0.0in}
\input{tcilatex}
\begin{document}

\title{Relationships Between Characteristic Path Length,\\
Efficiency, Clustering Coefficients, and Density}
\author{Alexander Strang \\
Case Western Reserve University\\
Cleveland, OH 44106 \and Oliver Haynes \\
Rochester Institute of Technology\\
Rochester, NY 14623 \and Nathan D. Cahill \\
Rochester Institute of Technology\\
Rochester, NY 14623 \and Darren A. Narayan \\
Rochester Institute of Technology\\
Rochester, NY 14623-5604}
\date{}
\maketitle

\begin{abstract}
Graph theoretic properties such as the clustering coefficient,
characteristic (or average) path length, global and local efficiency,
provide valuable information regarding the structure of a graph. These four
properties have applications to biological and social networks and have
dominated much of the the literature in these fields. While much work has
done in applied settings, there has yet to be a mathematical comparison of
these metrics from a theoretical standpoint. Motivated by networks appearing
in neuroscience, we show in this paper that these properties can be linked
together using a single property - graph density. In a recent paper
appearing in this journal we presented data from studies in neuroscience
that suggested that $E_{loc}(G)\approx \frac{1}{2}\left( 1+CC(G)\right) $.
In this paper, we verify that this relationship holds in general and
establish additional connections between graph metrics.
\end{abstract}

\section{Introduction}

Graph theory provides an abundance of valuable tools for analyzing social
and biological networks. There are many well known distance metrics which
are used to analyze networks including diameter, density, characteristic
path length, clustering coefficient, global and local efficiency which have
been extensively studied \cite{BS},.\cite{EVCN}, \cite{Ek}, \cite{Fukami}, 
\cite{Lancichinetti}, \cite{LM}, \cite{Li}, \cite{Nascimento}, \cite{Shang}, 
\cite{Sporns}, \cite{WS}, \cite{Zhang}, and \cite{Vargas}. Motivated by real
world data we present asymptotic linear relationships between the
characteristic path length, global efficiency, and graph density and also
between the clustering coefficient and local efficiency. In the current
literature these properties are often presented as independent metrics,
however we show in this paper that they are inextricably linked through a
single property - graph density.

The distance between two vertices in a network is simply the number of edges
in a shortest path connecting them. The characteristic path length (or
average path length) is the average of all of the distances over all pairs
of vertices in a network. The global efficiency is the average of all of the
reciprocals of the non-zero distances in a network \cite{LM}. There are also
two well known local properties for graphs which are based at the vertex
level.

In a network the \textit{open neighborhood} of a vertex is obtained by first
identifying all of the nodes with a direct link to the starting vertex.
These will be referred to as its "neighbors". Then\ among these nodes take
all of the edges appear in the original graph. The combination of the
neighbors and the edges forms the open neighborhood. The \textit{closed
neighborhood} of a node is the same idea as the open neighborhood, but we
include the starting vertex and edges connected to it. The two types of
neighborhoods are illustrated in Figure 1.

\FRAME{dtbpFU}{5.4379in}{2.0799in}{0pt}{\Qcb{Figure 1. (i) Network (ii) open
neighborhood of $x$ (iii) closed neighborhood of $x$}}{}{neighborhoods.jpg}{%
\special{language "Scientific Word";type "GRAPHIC";maintain-aspect-ratio
TRUE;display "USEDEF";valid_file "F";width 5.4379in;height 2.0799in;depth
0pt;original-width 7.2151in;original-height 2.7363in;cropleft "0";croptop
"1";cropright "1";cropbottom "0";filename
'../../ComplexNetworks/Neighborhoods.jpg';file-properties "XNPEU";}}

In this paper we show other relations between graph metrics and support them
with precise mathematical justification. Motivated by networks appearing in
neuroscience, we present relationships among properties involving shortest
paths: (i) Local efficiency has a precise linear relationship with the
clustering coefficient for the closed neighborhood version. (ii) Local
efficiency has an asymptotic linear relationship with the clustering
coefficient for the open neighborhood version, and (iii) global efficiency
has an asymptotic linear relationship with the characteristic path length.
Furthermore we show that these linear relationships are linked through a
single property - \textit{graph density}. The density of a graph is simply
the ratio of the number of edges in a graph to the number of possible edges.

\section{Preliminaries}

We begin by reviewing some well known properties of graphs. A graph $G$ is
comprised of a set of vertices and a set of (undirected) edges where an edge
joins two vertices. Unless otherwise stated we will use $n$ to denote the
number of vertices in a graph and $m$ to denote the number of edges in a
graph. The maximum number of edges in a graph $G$ with $n$ vertices is $%
\binom{n}{2}$. Given a graph $G$ with $m$ edges the density $D(G)$ equals $%
\frac{2m}{n(n-1)}$. Let $G$ be a graph with $n$ vertices and let $d(i,j)$
represent the \textit{distance} (the number of edges in a shortest path
between vertices $i$ and $j$ in $G$). If there is no path connecting $i$ and 
$j$, then $d(i,j)=\infty $. The \textit{diameter} of $G$ is denoted $diam(G)$
and equals $\max_{i,j}d(i,j)$. The \textit{characteristic path length} $L$
is the average distance over all pairs of vertices, $L=\frac{1}{n(n-1)}%
\dsum\nolimits_{i\neq j}d(i,j)$. For each vertex $i$, let $G_{i}$ be the
subgraph of $G$ which are induced by the neighbors of $i$, and let $%
G_{i}^{\prime }$ be the subgraph of $G$ which are induced by vertex $i$ and
the neighbors of $i$. In 2001, Latora and Marchiori \cite{LM} defined the 
\textit{global efficiency of a graph }to be\textit{\ }$E_{glob}(G)=\frac{1}{%
n(n-1)}\dsum\nolimits_{i\neq j}\frac{1}{d(i,j)}$. \ They also defined a
local version of efficiency. The \textit{local efficiency} is defined to be $%
E_{loc}(G)=\frac{1}{n}\dsum\nolimits_{i\in G}E_{glob}\left( G_{i}\right) $
and is the average of the global efficiencies of the subgraphs $G_{i}$ \cite%
{LM}. The\textit{\ clustering coefficient} was defined by Watts and Strogatz 
\cite{WS} to be $CC(G)=\frac{1}{n}\dsum\nolimits_{i}\frac{\left\vert
E(G_{i})\right\vert }{\binom{\left\vert V(G_{i})\right\vert }{2}}$ where $%
\left\vert V(G_{i})\right\vert $ is size of the vertex set of the graph $%
G_{i}$, and $\left\vert E(G_{i})\right\vert $ is size of the edge set of the
graph $G_{i}$. Closed variants of the clustering coefficient and local
efficiency can be defined by $\overline{CC}(G^{\prime })$ $=\frac{1}{n}%
\dsum\nolimits_{i}\frac{\left\vert E(G_{i}^{\prime })\right\vert }{\binom{%
V(G_{i}^{\prime })}{2}}$ and $\overline{E_{loc}}(G)=\frac{1}{n}%
\dsum\nolimits_{i\in G}E_{glob}\left( G_{i}^{\prime }\right) $.

While the properties have been well studied, more precise attention needs to
be given to the relationships between them. Analysis of biological networks,
fMRI data, and simulated (benchmark) networks that the following
inequalities suggest that $E_{loc}(G)\leq \frac{1}{2}(1+CC(G))$, $\overline{%
E_{loc}}(G)=\frac{1}{2}(1+\overline{CC}(G))$, and $\frac{1}{2}(3-L(G))\leq
E_{glob}(G)\leq \frac{1}{2}(1+D(G))$. In this paper, we prove that these
relationships hold in general and show that the inequalities approach
equality as the density of the graph increases.

\section{Connections between graph properties}

\subsection{Motivation from biological networks}

Results in this section were motivated by a research study of McCarthy,
Benuskova, and Franz where the clustering coefficient and local efficiency
properties were applied to functional MRI data from subjects with
posterior-anterior shift in aging (PASA) \cite{McCarthy1} and \cite%
{McCarthy2}. Figure 2 appears in \cite{McCarthy1} and shows that the
clustering coefficient and local efficiency data for both task based resting
state functional networks approaches the line $E_{loc}(G)$ $=$ $\frac{1}{2}%
(1+CC(G))$ from below, which we will show is consistent with our theoretical
findings. We will show later that Theorem \ref{2} gives a decent
approximation of the equation of this line segment when $0.2\leq CC(G)\leq 1$
which can be obtained using the points $(0.2,0.6)$ and $(1,1)$. The result
is $E_{loc}(G)$ $\approx $ $\frac{1}{2}(1+CC(G))$ and we show later in the
paper that this approximation improves as the density and clustering
coefficient approach $1$.

\FRAME{dtbpFw}{4.7444in}{3.1185in}{0pt}{}{}{mccarthy.jpg}{\special{language
"Scientific Word";type "GRAPHIC";maintain-aspect-ratio TRUE;display
"USEDEF";valid_file "F";width 4.7444in;height 3.1185in;depth
0pt;original-width 7.2566in;original-height 4.7573in;cropleft "0";croptop
"1";cropright "1";cropbottom "0";filename
'../../ComplexNetworks/McCarthy.jpg';file-properties "XNPEU";}}

\begin{center}
Figure 2. Comparison of $E_{loc\text{ }}(G)$ and $CC(G)$ for subjects with
PASA. \cite{McCarthy1}.

\bigskip
\end{center}

\subsection{Motivation from analysis of functional MRI data}

In this section we investigate connections between the two global
properties, characteristic path length $L(G)$, and global efficiency $%
E_{glob}$, motivated by a study of resting state functional MRI scans
conducted by the Rochester Center for Brain Imaging at the University of
Rochester in 2013. The data from the scans reflected correlations in blood
oxygenated level dependent (BOLD) signals between pairs of 92 selected
regions of the brain. The Pearson correlations were all between $1$ (perfect
correlation) and $-1$ (perfect anti-correlation). The correlations were then
binarized with a threshold of $0$ to form an adjacency matrix for each scan.
Analysis of the 25 resting state functional MRI data sets resulted in the
following averages: $D(G)=0.483$, $L(G)=1.518$, and $E_{glob}(G)=0.741$. We
note that $D(G)\approx 0.5$, $L(G)\approx 1.5$, and $E_{glob}(G)\approx 0.75$%
. Working towards a linear model we note that when $D(G)=0$, $%
L(G)=E_{glob}(G)=0$, and when $D(G)=1$, $L(G)=E_{glob}(G)=1$. We obtained a
linear regression showing connections between the characteristic path length
and the global efficiency to the density, $L(G)=2-D(G)$ and $E_{glob}(G)=%
\frac{1}{2}\left( 1+D(G)\right) $.

For another analysis we analyzed data from another functional MRI study
conducted at the Rochester Center for Brain Imaging at the\ University of
Rochester. The study involved 16 subjects who were asked to either view or
pantomime various tools over the course of 8 scans (128 total scans). The
data from the scans reflected correlations in blood oxygenated level
dependent (BOLD) signals between pairs of 11 selected regions of the brain
known to respond to tool viewing and /or pantomiming. The Pearson
correlations were all between $1$ (perfect correlation) and $-1$ (perfect
anti-correlation). The correlations were then binarized with a threshold of $%
0$ to form an adjacency matrix for each scan. Analysis of the 128 graphs
yielded some interesting findings:\ $E_{loc}(G)\leq \frac{1}{2}(1+CC(G))$
and $E_{glob}(G)\leq \frac{1}{2}(1+D(G))$. We also found that for 127 of the
graphs, $E_{glob}(G)\geq $ $\frac{1}{2}(3-L(G))$ (The remaining graph was
disconnected resulting in an infinite value for $L)$. The maximum deviation
between $E_{glob}(G)$ and $\frac{1}{2}(1+D(G))$ was $0.0909$ with $%
E_{glob}(G)$ $=\frac{1}{2}(1+D(G))$ when the density of the graph was
greater than $0.7818$. \ In addition, for the 127 connected graphs the
maximum deviation between $E_{glob}(G)$ and $\frac{1}{2}(3-L(G))$ was $0.05$%
. We also found that $E_{glob}(G)$ $=\frac{1}{2}(3-L(G))$ when the density
was greater than $0.7818$. Furthemore, we found that $\left\vert \overline{%
E_{loc}}(G)-\frac{1}{2}\left( 1+\overline{CC}(G)\right) \right\vert \leq
0.00005$ which matches with our Theorem \ref{1} other than a slight
deviation due to rounding.

We will show in subsection 3.4 that these approximations approach equality
as the density of the graphs approaches $1$.

\subsection{Motivation from benchmark simulations}

To generate Figures 3, 4, and 5 we constructed benchmark graphs with 128
vertices having degree and community size distributions governed by power
laws with exponents $2$ and $1$, respectively, where each vertex shares a
fraction of $0.8$ of its edges with other vertices in its community. The
Lancichinetti-Fortunato-Radicchi (LFR) benchmark graphs \cite{Lancichinetti}
enable the user to define the desired average vertex degree. We generated
LFR benchmark graphs for average vertex degrees ranging from $4$ to $64$ (by 
$1$); for each average vertex degree, we generated $30$ realizations of
benchmark graphs. For each of the $30\times 61=1830$ benchmark graphs we
generated, we computed $E_{glob}(G)$, $E_{loc}(G)$, $D(G)$, and $L(G)$. Our
analysis is that for Figure 3, $E_{loc}(G)\leq \frac{1}{2}(1+CC(G))$; Figure
4, $E_{glob}(G)\leq \frac{1}{2}(1+D(G))$; and Figure 5, $E_{glob}(G)\geq $ $%
\frac{1}{2}(3-L(G))$, and all of these inequalities asymptotically approach
equality as the density approaches $1$.

\begin{tabular}{ll}
\FRAME{itbpFU}{2.8573in}{2.188in}{0in}{\Qcb{Figure 3. $E_{loc}(G)$ compared
with $CC(G)$}}{}{Figure}{\special{language "Scientific Word";type
"GRAPHIC";maintain-aspect-ratio TRUE;display "USEDEF";valid_file "T";width
2.8573in;height 2.188in;depth 0in;original-width 5.5486in;original-height
4.2359in;cropleft "0";croptop "1";cropright "1";cropbottom "0";tempfilename
'OWOGZT00.wmf';tempfile-properties "XPR";}} & \FRAME{itbpFU}{2.7406in}{%
2.1811in}{0in}{\Qcb{Figure 4. $E_{glob}(G)$ compared with $D(G)$}}{}{Figure}{%
\special{language "Scientific Word";type "GRAPHIC";maintain-aspect-ratio
TRUE;display "USEDEF";valid_file "T";width 2.7406in;height 2.1811in;depth
0in;original-width 5.3195in;original-height 4.222in;cropleft "0";croptop
"1";cropright "1";cropbottom "0";tempfilename
'OWOGZT01.wmf';tempfile-properties "XPR";}}%
\end{tabular}

\FRAME{dtbpFU}{2.8461in}{2.188in}{0pt}{\Qcb{Figure 5. $E_{loc}(G)$ compared
with $L(G)$}}{}{Figure}{\special{language "Scientific Word";type
"GRAPHIC";maintain-aspect-ratio TRUE;display "USEDEF";valid_file "T";width
2.8461in;height 2.188in;depth 0pt;original-width 5.5279in;original-height
4.2359in;cropleft "0";croptop "1";cropright "1";cropbottom "0";tempfilename
'OWOGZT02.wmf';tempfile-properties "XPR";}}

\subsection{General results}

Next we establish relationships between the local efficiency and the
clustering coefficient.

\begin{lemma}
Let $v$ be a vertex in $G$ and let $G_{v}$ be the subgraph induced by the
vertices in the closed neighborhood of $v$. Then $diam(G_{v})\leq 2$.
\end{lemma}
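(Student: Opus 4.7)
The plan is to prove the diameter bound by a straightforward case analysis on two arbitrary vertices $u, w$ of $G_v$, showing in each case that a path of length at most $2$ between them lies entirely inside $G_v$. Since $G_v$ is the subgraph induced by the closed neighborhood $\{v\} \cup N(v)$, the vertex $v$ itself and all edges from $v$ to its neighbors are present in $G_v$, which is the structural feature I would exploit.

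First I would handle the trivial case where one of $u, w$ equals $v$: say $u = v$. Then $w$ is a neighbor of $v$ in $G$, so the edge $vw$ is present in $G_v$ and $d_{G_v}(u,w) = 1$. Second, I would handle the case where $u \ne v$ and $w \ne v$. Then both $u$ and $w$ lie in $N(v)$, so the edges $uv$ and $vw$ both exist in $G_v$. Concatenating these edges gives a walk $u \,{-}\, v \,{-}\, w$ of length $2$ inside $G_v$, whence $d_{G_v}(u,w) \le 2$. Since $u, w$ were arbitrary, we conclude $\mathrm{diam}(G_v) \le 2$.

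There is essentially no obstacle here; the only point requiring care is to verify that the length-$2$ detour through $v$ actually lies in the induced subgraph $G_v$ rather than only in the ambient graph $G$. This is immediate from the observation that $v$ belongs to the closed neighborhood and every edge of the form $vx$ with $x \in N(v)$ is retained when we induce on $\{v\} \cup N(v)$. No assumptions about connectivity of $G$ or of the open neighborhood $G_v^{\circ}$ are needed, since the presence of the hub $v$ already guarantees a universal vertex in $G_v$, which by itself forces $\mathrm{diam}(G_v) \le 2$.
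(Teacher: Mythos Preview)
Your proposal is correct and follows essentially the same argument as the paper: both observe that $v$ is adjacent to every other vertex of $G_v$, so any two vertices distinct from $v$ are joined by a path of length~$2$ through $v$. Your write-up is slightly more explicit about why the detour through $v$ lies in the induced subgraph, but the underlying idea is identical.
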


\begin{proof}
By definition of $G_{v}$, $v$ is adjacent to all other vertices. For any
pair of vertices $i$ and $j$ not equal to $v$ they are either adjacent or
connected by a path through $v$.
\end{proof}

Given a graph $G$, let $A(G)$ be the adjacency matrix and let $Eff(G)$ be
the efficiency matrix where $Eff_{i,j}(G)=\frac{1}{d(i,j)}$ for all $i\neq j$%
. We note that if $(i,j)$ is an edge of $G$ then $A_{i,j}(G)=Eff_{i,j}(G)=1$
and if $(i,j)$ is not an edge of $G$ then $Eff_{i,j}(G)-A_{i,j}(G)=\frac{1}{%
d(i,j)}$.

\begin{theorem}
\label{1}For any graph $G$, $\overline{E_{loc}}(G)=\frac{1}{2}(1+\overline{CC%
}(G))$.
\end{theorem}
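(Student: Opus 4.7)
The plan is to exploit the lemma just proved: in the closed neighborhood $G_i'$, the central vertex $i$ is adjacent to every other vertex, so the diameter is at most $2$. This gives a decisive structural dichotomy: every pair of distinct vertices in $G_i'$ is either joined by an edge (distance $1$) or not (distance exactly $2$). This reduces the computation of $E_{glob}(G_i')$ from a shortest-path calculation to a simple counting argument.

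First I would let $n_i = |V(G_i')|$ and $m_i = |E(G_i')|$, and rewrite $E_{glob}(G_i')$ by partitioning the $\binom{n_i}{2}$ unordered pairs of vertices according to whether they form an edge in $G_i'$. The $m_i$ edge-pairs contribute $1$ each and the remaining $\binom{n_i}{2} - m_i$ non-edge pairs contribute $1/2$ each to $\sum_{\{j,k\}} 1/d(j,k)$. Using the fact that the ordered-pair sum in the definition of $E_{glob}$ equals twice the unordered-pair sum, and $n_i(n_i-1) = 2\binom{n_i}{2}$, I would obtain
\[
E_{glob}(G_i') \;=\; \frac{m_i + \tfrac{1}{2}\bigl(\binom{n_i}{2} - m_i\bigr)}{\binom{n_i}{2}} \;=\; \frac{1}{2}\!\left(1 + \frac{|E(G_i')|}{\binom{|V(G_i')|}{2}}\right).
\]

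The final step is to average this identity over all $n$ vertices: by linearity, $\overline{E_{loc}}(G) = \frac{1}{n}\sum_i E_{glob}(G_i')$ becomes $\frac{1}{2} + \frac{1}{2n}\sum_i |E(G_i')|/\binom{|V(G_i')|}{2}$, and the second term is exactly $\tfrac{1}{2}\,\overline{CC}(G')$ by definition, giving the claimed equality.

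I do not foresee a real obstacle here; the only subtlety is matching the normalization conventions (ordered versus unordered pairs, and the $1/\binom{n}{2}$ versus $1/(n(n-1))$ conventions) so that the algebraic manipulation yields $\tfrac{1}{2}$ as the clean additive constant. The identity is exact rather than asymptotic precisely because the diameter-$2$ lemma makes every non-edge contribute the same value $1/2$, with no error terms to absorb.
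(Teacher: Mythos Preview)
Your proposal is correct and follows essentially the same approach as the paper: both use the diameter-$2$ lemma for closed neighborhoods to split the efficiency sum into edge pairs (contributing $1$) and non-edge pairs (contributing $\tfrac{1}{2}$), obtain $E_{glob}(G_i') = \tfrac{1}{2}\bigl(1 + |E(G_i')|/\binom{|V(G_i')|}{2}\bigr)$, and then average over all vertices. Your treatment of the ordered/unordered normalization is if anything slightly more explicit than the paper's.
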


\begin{proof}
Let $v$ vertex in $G$ and let $G_{v}^{\prime }$ be the subgraph induced by
the vertices in the closed neighborhood of $v$. Let $n(G_{v}^{\prime })$ be
the number of vertices in $G_{v}^{\prime }$. Then $Eff_{i,j}(G)-A_{i,j}(G)=0$
or $\ Eff_{i,j}(G)-A_{i,j}(G)=\frac{1}{2}$. \medskip

First we have $\overline{CC}(G_{v}^{\prime })=\dsum\limits_{\left(
i,j\right) \in E(G)}\frac{A_{i,j}(G_{v}^{\prime })}{n(G_{v}^{\prime
})(n(G_{v}^{\prime })-1)}$.

Next we have,

$\overline{E_{loc}}(G_{v}^{\prime })=\dsum\limits_{\left( i,j\right) }\frac{1%
}{n(G_{v}^{\prime })(n(G_{v}^{\prime })-1)}\left( Eff_{i,j}(G_{v}^{\prime
})\medskip \right) $

$\medskip =\frac{1}{n(G_{v}^{\prime })(n(G_{v}^{\prime })-1)}\left(
\dsum\limits_{\left( i,j\right) \in E(G_{v}^{\prime
})}1+\dsum\limits_{\left( i,j\right) \notin E(G_{v}^{\prime })}\frac{1}{2}%
\right) $

$=\frac{1}{2}\left( 1+\overline{CC}(G_{v}^{\prime })\right) .\medskip $

Summing over all vertices $v$ gives $\overline{E_{loc}}(G)=\frac{1}{2}\left(
1+\overline{CC}(G)\right) $.\medskip

In the next theorem we show a relationship between the open neighborhood
versions of local efficiency and the clustering coefficient.
\end{proof}

\begin{theorem}
\label{3}For any graph $G$, $E_{loc}(G)\leq \frac{1}{2}(1+CC(G))$.
\end{theorem}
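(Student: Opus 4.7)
The plan is to mimic the argument used for Theorem \ref{1}, but replace the equality at the per-vertex level by an inequality that is forced by the fact that, in the open neighborhood $G_{v}$, pairs of neighbors of $v$ need not be at distance $2$ or less from one another (the path through $v$ is no longer available). Concretely, I would fix a vertex $v$ with degree $k=|V(G_{v})|$ and split the ordered pairs $(i,j)$ of vertices of $G_{v}$ with $i\neq j$ into two classes: those for which $(i,j)\in E(G_{v})$, and those for which $(i,j)\notin E(G_{v})$.

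For the first class, $d_{G_{v}}(i,j)=1$, so each contributes $1$ to the sum defining $E_{glob}(G_{v})$. For the second class, the key observation is simply that $d_{G_{v}}(i,j)\geq 2$ (or $d_{G_{v}}(i,j)=\infty$, which contributes $0$), so each such pair contributes at most $\tfrac{1}{2}$. Writing $|E(G_{v})|$ for the edges of the open neighborhood, the number of ordered non-edge pairs is $k(k-1)-2|E(G_{v})|$, and combining the two pieces gives
\begin{equation*}
E_{glob}(G_{v}) \;\leq\; \frac{1}{k(k-1)}\Bigl(2|E(G_{v})| + \tfrac{1}{2}\bigl(k(k-1)-2|E(G_{v})|\bigr)\Bigr) \;=\; \frac{1}{2}\left(1+\frac{|E(G_{v})|}{\binom{k}{2}}\right).
\end{equation*}
The right-hand side is exactly $\tfrac{1}{2}(1+CC_{v})$, where $CC_{v}$ is the local clustering coefficient at $v$.

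Averaging this inequality over all vertices $v\in V(G)$ yields $E_{loc}(G)\leq \tfrac{1}{2}(1+CC(G))$, which is the desired bound. There is essentially no hard step here: the contrast with Theorem \ref{1} is entirely due to the loss of the Lemma's diameter-$2$ guarantee when passing from $G_{v}^{\prime}$ to $G_{v}$, and once this is acknowledged, the non-edge contributions are controlled uniformly by $\tfrac{1}{2}$. If anything, the only point worth being careful about is the disconnected case, where $d_{G_{v}}(i,j)=\infty$; but this only strengthens the inequality since $1/\infty=0<\tfrac{1}{2}$, so no extra argument is needed. I would also note in passing that equality in the per-vertex bound holds precisely when every non-adjacent pair in $G_{v}$ is at distance exactly $2$, which explains why the inequality tightens as the graph density (and hence the density of each neighborhood) grows toward $1$.
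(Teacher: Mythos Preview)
Your proof is correct and follows essentially the same route as the paper: bound the per-vertex global efficiency $E_{glob}(G_{v})$ by replacing each non-edge contribution with $\tfrac{1}{2}$, identify the result as $\tfrac{1}{2}(1+CC_{v})$, and average over $v$. Your write-up is in fact more explicit than the paper's (which compresses the edge/non-edge split into a single line), and your remarks on the disconnected case and on when equality holds are welcome clarifications that the paper omits.
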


\begin{proof}
Let $v$ be a vertex of $G$. $\ $Then $E_{glob}(G_{v})\leq
\dsum\limits_{\left( i,j\right) \in Eff(G_{v})}1+\dsum\limits_{\left(
i,j\right) \notin Eff(G_{v})}\frac{1}{2}\medskip \medskip =\frac{1}{2}\left(
1+CC(G_{v})\right) .\medskip $

Summing over all vertices $v$ gives $E_{loc}(G)\leq \frac{1}{2}\left(
1+CC(G)\right) $.\medskip
\end{proof}

In general the bound cannot be improved since there is equality when $G$ is
a complete graph.

We note in the next theorem in graphs where the distance between most
vertices is less than or equal to $2$ there is a near linear approximation
between $E_{loc}(G)$ and $CC(G)$.

\begin{theorem}
\label{2}As the fraction of vertex pairs $u,v$ with $d(u,v)\leq 2$
approaches $1$, then $E_{loc}(G)$ approaches $\frac{1}{2}\left(
1+CC(G)\right) $.
\end{theorem}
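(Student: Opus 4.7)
The plan is to extract a quantitative slack term from the proof of Theorem \ref{3} and then show the hypothesis forces this slack to zero. The proof of Theorem \ref{3} replaces each non-edge contribution $1/d_{G_v}(u,w)$ to $E_{glob}(G_v)$ by the upper bound $1/2$; this replacement is exact on non-edges with $d_{G_v}(u,w)=2$ and loses a strictly positive amount, at most $1/2$, on non-edges with $d_{G_v}(u,w)>2$ (including pairs that are disconnected in $G_v$). So the first task is simply to turn that argument into an identity, not an inequality, and then track the leftover.

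Concretely, I would rewrite the step in the proof of Theorem \ref{3} as
$$\tfrac{1}{2}(1+CC(G_v)) - E_{glob}(G_v) \;=\; \frac{1}{|V(G_v)|(|V(G_v)|-1)} \sum_{\substack{u \neq w \in V(G_v) \\ d_{G_v}(u,w) > 2}} \left( \tfrac{1}{2} - \frac{1}{d_{G_v}(u,w)} \right),$$
and then average over $v$ to obtain
$$0 \;\leq\; \tfrac{1}{2}(1+CC(G)) - E_{loc}(G) \;\leq\; \frac{1}{2n} \sum_{v \in V(G)} \frac{N_v}{|V(G_v)|(|V(G_v)|-1)},$$
where $N_v$ counts ordered pairs $(u,w) \in V(G_v)^2$ with $d_{G_v}(u,w) > 2$. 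It then remains to argue that this upper bound tends to $0$ as the fraction of pairs in $V(G)^2$ with $d_G \leq 2$ tends to $1$.

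The main obstacle is closing that last step, because $d_{G_v}(u,w)>2$ is strictly stronger than $d_G(u,w)>2$: a pair $(u,w)$ with $u,w\in N(v)$ contributes to $N_v$ precisely when $(u,w)\notin E(G)$ and $N(u)\cap N(w)\cap N(v)=\emptyset$, and this can happen even when $d_G(u,w)=2$ (the unique common neighbor simply fails to be adjacent to $v$). The natural route is a double-counting argument over triples $(v,u,w)$ with $u,w\in N(v)$: an ordered pair with $d_G(u,w)>2$ automatically contributes to $N_v$ for every $v\in N(u)\cap N(w)$, while pairs with $d_G(u,w)=2$ contribute only to those $v$ whose neighborhood misses every common neighbor of $u$ and $w$. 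Bounding $\sum_v N_v$ by the number of $G$-distant pairs times a maximum-degree factor, and then feeding in the hypothesis, should push the average slack to zero. I expect this passage — from a sparse set of pairs bad in $G$ to a sparse set of pairs bad in the local neighborhoods — to be the delicate step, and it seems to implicitly rely on the dense-graph setting in which the theorem is situated.
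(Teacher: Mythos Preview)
Your analysis is considerably more careful than the paper's own argument, which consists of a single sentence pointing back to Theorem~\ref{1} and asserting that the deviation is ``directly related to the number of pairs of vertices that have a distance of more than $2$''. Your identity for the per-vertex slack and your observation that $d_{G_v}(u,w)>2$ is strictly stronger than $d_G(u,w)>2$ are both correct, and the paper does not engage with this distinction at all.

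The obstacle you flag, however, is not merely delicate: it is fatal to the statement as written. Take $G=K_{n,n}$. Every pair of vertices has $d_G\le 2$, so the hypothesis is satisfied with fraction exactly~$1$. Yet for each vertex $v$ the open neighbourhood $G_v$ is an edgeless graph on $n$ vertices, so $E_{glob}(G_v)=0$ and the local clustering at $v$ is $0$. Hence $E_{loc}(G)=0$ while $\frac{1}{2}(1+CC(G))=\frac{1}{2}$, and the gap does not close as $n\to\infty$. This is precisely the scenario you describe: every non-edge pair $u,w\in N(v)$ has $d_G(u,w)=2$, but all of their common neighbours lie outside $N(v)$. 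Consequently your proposed double-counting bound on $\sum_v N_v$ in terms of the number of $G$-distant pairs cannot hold in general, and no argument along these lines will establish the theorem as stated. The result is only true under an additional hypothesis---for instance that the fraction of pairs with $d_{G_v}\le 2$ tends to $1$ across the local neighbourhoods, or a density assumption strong enough to force this---and the paper's one-line proof simply elides the issue you have correctly isolated.
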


\begin{proof}
The proof is similar to the approach found in Theorem \ref{1}, noting that
the deviation between $E_{loc}(G)$ and $\frac{1}{2}\left( 1+CC(G)\right) $
is directly related to the number of pairs of vertices that have a distance
of more than $2$. As this quantity decreases our approximation becomes
closer.
\end{proof}

In our first two lemmas, we investigate bounds between the different
properties.

\begin{lemma}
\label{6}For any graph $G$, $E_{glob}(G)\leq \frac{1}{2}(1+D(G))$.
\end{lemma}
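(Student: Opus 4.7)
The plan is to split the defining sum for $E_{glob}(G)$ according to whether each ordered pair $(i,j)$ with $i\neq j$ is an edge of $G$ or not, and then bound each piece separately. If $(i,j)\in E(G)$ then $d(i,j)=1$, so the contribution of such a pair to the sum is exactly $1$. If $(i,j)\notin E(G)$ and $i\neq j$, then either the two vertices lie in different components, in which case $d(i,j)=\infty$ and we interpret $1/d(i,j)=0$, or they are connected by a path of length at least $2$, in which case $1/d(i,j)\leq 1/2$. In both cases the contribution is at most $1/2$.

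Next I would do the counting. Since $G$ has $m$ edges, there are exactly $2m$ ordered pairs $(i,j)$ with $(i,j)\in E(G)$, and therefore $n(n-1)-2m$ ordered pairs with $i\neq j$ that are not edges. Combining this with the per-pair bound above yields
\[
\sum_{i\neq j}\frac{1}{d(i,j)} \;\leq\; 2m\cdot 1 + \bigl(n(n-1)-2m\bigr)\cdot\frac{1}{2} \;=\; m + \frac{n(n-1)}{2}.
\]
Dividing by $n(n-1)$ gives
\[
E_{glob}(G) \;\leq\; \frac{m}{n(n-1)} + \frac{1}{2} \;=\; \frac{1}{2}D(G) + \frac{1}{2} \;=\; \frac{1}{2}\bigl(1+D(G)\bigr),
\]
which is the claimed inequality.

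There is no real obstacle here; the argument is essentially a bookkeeping one, mirroring the style already used in the proofs of Theorems \ref{1} and \ref{3}. The only subtlety worth flagging is the treatment of vertex pairs lying in different connected components of $G$, where $d(i,j)=\infty$; the convention $1/\infty=0$ is consistent with the definition of $E_{glob}$ and only strengthens the inequality, so disconnectedness causes no trouble. Equality is attained precisely when every non-edge pair $(i,j)$ satisfies $d(i,j)=2$, which in particular holds for the complete graph (vacuously) and, more generally, for graphs of diameter at most $2$; this foreshadows the asymptotic tightness as $D(G)\to 1$ emphasized in the paper.
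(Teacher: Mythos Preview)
Your proof is correct and follows essentially the same approach as the paper: split the sum defining $E_{glob}(G)$ according to whether a pair is an edge (contribution $1$) or a non-edge (contribution at most $\tfrac{1}{2}$), then count. The only cosmetic difference is that the paper further separates the non-edge pairs into those at distance exactly $2$ (denoted $\alpha$) and those at distance greater than $2$ (denoted $\beta$), a refinement not needed for this lemma itself but reused in the proofs of Lemma~\ref{7} and Theorem~\ref{8}.
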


\begin{proof}
We can express $E_{glob}(G)$ as $\frac{1}{n(n-1)}\left( m+\frac{1}{2}\alpha
+\epsilon \beta \right) $ where $\alpha $ is the number of pairs of vertices
which are separated by a distance of exactly $2$, and $\beta $ is the number
of pairs of vertices whose distance is greater than $2$ and hence has an
efficiency of $\epsilon $ $<\frac{1}{2}$. Then $E_{glob}(G)$ $=$ $\frac{1}{%
n(n-1)}\left( m+\frac{1}{2}\alpha +\epsilon \beta \right) $ $\leq $ $\frac{1%
}{n(n-1)}\left( m+\frac{1}{2}(\alpha +\beta )\right) $ $=$ $\frac{1}{n(n-1)}%
\left( m+\frac{n(n-1)-m}{2}\right) $ $=$ $\frac{1}{n(n-1)}\left( \frac{m}{2}+%
\frac{n(n-1)}{2}\right) $ $=$ $\frac{1}{2}\left( 1+\frac{m}{n(n-1)}\right) $ 
$=$ $\frac{1}{2}(1+D(G))$.
\end{proof}

\begin{lemma}
\label{7}For any graph $G$, $L(G)\geq 2-D(G)$.
\end{lemma}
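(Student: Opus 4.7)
The plan is to mirror the argument used in Lemma \ref{6}, but now lower-bounding each distance instead of upper-bounding each efficiency. The key observation is that for any ordered pair $(i,j)$ with $i \neq j$, the distance $d(i,j)$ is either exactly $1$ (if $(i,j)$ is an edge) or at least $2$ (if $(i,j)$ is not an edge, allowing $\infty$ in the disconnected case).

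First I would partition the sum defining $L(G)$ according to whether the pair is adjacent. There are $2m$ ordered pairs that form edges, each contributing exactly $1$ to the sum, and $n(n-1) - 2m$ ordered pairs that are not edges, each contributing at least $2$. This immediately gives
\begin{equation*}
\sum_{i \neq j} d(i,j) \;\geq\; 2m \cdot 1 + (n(n-1) - 2m) \cdot 2 \;=\; 2n(n-1) - 2m.
\end{equation*}
Dividing through by $n(n-1)$ yields
\begin{equation*}
L(G) \;\geq\; 2 - \frac{2m}{n(n-1)} \;=\; 2 - D(G),
\end{equation*}
which is the desired inequality.

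The disconnected case is not really an obstacle: if some pair has $d(i,j) = \infty$ then $L(G) = \infty$ and the bound is trivial, so we lose nothing by treating $d(i,j) \geq 2$ uniformly on non-edges. As in Lemma \ref{6}, equality holds precisely when every non-adjacent pair is actually at distance $2$ (e.g., when $\text{diam}(G) \leq 2$), which matches the observation in the introduction that the inequality tightens as $D(G)$ grows and, in particular, is tight when $G$ is complete. Since the argument reduces to counting edges versus non-edges and bounding a single value per pair, there is no significant technical obstacle; the work is entirely in the accounting.
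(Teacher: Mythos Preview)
Your argument is correct and is essentially the same as the paper's: both partition the ordered pairs by distance and lower-bound every non-edge distance by $2$, then recognize the resulting expression as $2-D(G)$. The only cosmetic difference is that the paper keeps the three-way split into edges, distance-$2$ pairs ($\alpha$), and longer pairs ($\beta$) used in Lemma~\ref{6}, whereas you collapse the last two cases; the content is identical.
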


\begin{proof}
$L=\frac{m+2\alpha +\epsilon \beta }{n(n-1)}\geq \frac{m+2\left( \alpha
+\beta \right) }{n(n-1)}=\frac{2\left( m+\alpha +\beta \right) }{n(n-1)}-%
\frac{m}{n(n-1)}=2-D$.
\end{proof}

We note that the bound in the previous lemma is tight for the case where $G$
is a complete graph.

The combination of Lemmas \ref{6} and \ref{7} yields the following theorem.

\bigskip

\begin{theorem}
\label{8}For any graph $G$ , $E_{glob}(G)\geq \frac{1}{2}(3-L(G))$.
\end{theorem}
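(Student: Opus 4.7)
The plan is to reduce the claimed bound to a pointwise inequality at the level of vertex pairs. The crucial arithmetic fact is that for every positive integer $d$ we have $(d-1)(d-2) \geq 0$; dividing by $2d > 0$ and rearranging gives
\[
\frac{1}{d} \;\geq\; \frac{1}{2}(3 - d),
\]
with equality at $d = 1$ and $d = 2$, and strict inequality for $d \geq 3$ since then $3 - d \leq 0 < 1/d$. Because every finite graph distance $d(i,j)$ is a positive integer, this inequality applies term-by-term to each ordered pair $i \neq j$ in $G$.

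From there, summing the pointwise inequality over all ordered pairs $i \neq j$ and dividing by $n(n-1)$ converts the left side directly into $E_{glob}(G)$ and the right side into $\frac{1}{2}(3 - L(G))$, by the definitions given in the Preliminaries; the theorem follows at once. An alternative route, more in keeping with the style of Lemmas \ref{6} and \ref{7}, is to partition the pairs into those at distance $1$, $2$, and greater than $2$ (counts $m$, $\alpha$, $\beta$) and verify the bound bucket by bucket: the contributions at distance $1$ and $2$ already saturate $\frac{1}{2}(3-d)$, while for $d \geq 3$ the contribution to $E_{glob}$ is positive and the matching term in $\frac{1}{2}(3 - L(G))$ is nonpositive. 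This recycles exactly the kind of algebra used in the preceding lemmas and yields the same conclusion.

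The only delicate point is the treatment of disconnected graphs: for any pair with $d(i,j) = \infty$ the conventions $1/\infty = 0$ and $d(i,j) = \infty$ send both sides of the pointwise inequality to $0 \geq -\infty$, and since $L(G) = \infty$ the stated bound is then vacuous. Thus the substantive content lies in the connected case, where the argument is an immediate consequence of the pointwise fact $(d-1)(d-2) \geq 0$ and there is no serious obstacle to overcome.
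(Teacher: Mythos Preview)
Your argument is correct. The pointwise observation $(d-1)(d-2)\ge 0$, i.e.\ $\tfrac{1}{d}\ge \tfrac{1}{2}(3-d)$ for every positive integer $d$, is exactly what is needed, and summing it over all ordered pairs $i\neq j$ and normalizing by $n(n-1)$ gives the theorem immediately. Your handling of the disconnected case is also fine: once any pair has $d(i,j)=\infty$ the right-hand side is $-\infty$ and the inequality is trivial.

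The paper does not argue pointwise. It works instead with the aggregate decomposition by distance buckets---pairs at distance $1$ (count $m$), at distance $2$ (count $\alpha$), and at distance greater than $2$ (count $\beta$)---writing $E_{glob}$ and $L$ in terms of $m,\alpha,\beta$ and then checking algebraically that $E_{glob}+\tfrac{1}{2}L\ge \tfrac{3}{2}$ via the identity $m+\alpha+\beta=\binom{n}{2}$. Your ``alternative route'' paragraph is essentially this computation. What your pointwise version buys is clarity and robustness: it makes transparent that equality holds exactly when every pair is at distance $1$ or $2$ (i.e.\ $\mathrm{diam}(G)\le 2$), and it sidesteps the notational awkwardness the paper runs into by using a single symbol $\epsilon$ for the distance-$>\!2$ contribution in both $E_{glob}$ and $L$. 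The paper's approach, on the other hand, keeps the argument parallel to Lemmas~\ref{6} and~\ref{7} and makes the role of the density $D(G)=m/\binom{n}{2}$ more visible.
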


\begin{proof}
Following from above we note that $E_{glob}(G)=\frac{1}{n(n-1)}\left( m+%
\frac{1}{2}\alpha +\epsilon \beta \right) $ and $L=\frac{m+2\alpha +\epsilon
\beta }{n(n-1)}$.
\end{proof}

Then $\frac{1}{n(n-1)}\left( m+\frac{1}{2}\alpha +\epsilon \beta \right)
\geq \frac{1}{2}$\bigskip $\left( 3-\frac{m+2\alpha +\epsilon \beta }{n(n-1)}%
\right) $

$\Leftrightarrow \frac{1}{n(n-1)}\left( m+\frac{1}{2}\alpha +\epsilon \beta
\right) \geq \frac{3}{2}-\frac{1}{2}\bigskip \frac{m+2\alpha +\epsilon \beta 
}{n(n-1)}$

\bigskip $\Leftrightarrow \frac{1}{n(n-1)}\left( m+\frac{1}{2}\alpha
+\epsilon \beta \right) +\frac{1}{2}\bigskip \frac{m+2\alpha +\epsilon \beta 
}{n(n-1)}\geq \frac{3}{2}$

$\Leftrightarrow \frac{1}{n(n-1)}\left( \left( m+\frac{1}{2}\alpha +\epsilon
\beta \right) +\frac{1}{2}\left( m+2\alpha +\epsilon \beta \right) \right)
\geq \frac{3}{2}$

\bigskip

$\Leftrightarrow \frac{1}{n(n-1)}\left( \left( m+\frac{1}{2}\alpha +\epsilon
\beta \right) +\frac{1}{2}\left( m+2\alpha +\epsilon \beta \right) \right)
\geq \frac{3}{2}$

$\bigskip $

Note that $\frac{1}{n(n-1)}\left( \left( m+\frac{1}{2}\alpha +\epsilon \beta
\right) +\frac{1}{2}\left( m+2\alpha +\epsilon \beta \right) \right)
\medskip $

$=\frac{1}{n(n-1)}\allowbreak \left( \frac{3}{2}m+\frac{3}{2}\alpha +\frac{3%
}{2}\beta \epsilon \right) \medskip $

$=\frac{1}{n(n-1)}\frac{3}{2}\left( m+\alpha +\epsilon \beta \right)
\medskip $

$=\frac{1}{n(n-1)}\frac{3}{2}\left( m+\alpha +\beta +(1-\epsilon )\beta
\right) \medskip $

$=\frac{3}{2}\left( \frac{m+\alpha +\beta }{n(n-1)}+\frac{(1-\epsilon )\beta 
}{n(n-1)}\right) \medskip \medskip $

$=\frac{3}{2}\left( 1+\frac{(1-\epsilon )\beta }{n(n-1)}\right) \geq \frac{3%
}{2}$.

\begin{corollary}
The bounds in Lemmas \ref{6} and \ref{7} and Theorem \ref{8} are tight for
the case where $G$ is a graph where $diam(G)\leq 2$.
\end{corollary}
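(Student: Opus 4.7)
The plan is to observe that all three proofs (Lemmas \ref{6}, \ref{7}, and Theorem \ref{8}) introduce their slack through the same quantity: the number $\beta$ of vertex pairs at distance strictly greater than $2$. The corollary then follows from the simple remark that $\mathrm{diam}(G)\leq 2$ forces $\beta=0$, which converts every inequality in those proofs into an equality.

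Concretely, I would carry out three quick verifications. For Lemma \ref{6}, the only inequality used is $\epsilon\beta\leq \tfrac{1}{2}\beta$; when $\beta=0$ this is $0=0$, and the residual identity $E_{glob}(G)=\tfrac{1}{n(n-1)}(m+\tfrac{1}{2}\alpha)$ simplifies via $\alpha+\beta=n(n-1)-m$ to $\tfrac{1}{2}(1+D(G))$. For Lemma \ref{7}, the only inequality bounds $\epsilon\beta$ from below by $2\beta$, again trivially an equality when $\beta=0$, and the residual identity collapses to $L(G)=2-D(G)$. For Theorem \ref{8}, the displayed computation at the end of its proof writes the relevant combined expression as $\tfrac{3}{2}\bigl(1+\tfrac{(1-\epsilon)\beta}{n(n-1)}\bigr)$; the excess over $\tfrac{3}{2}$ is exactly $\tfrac{3}{2}\cdot\tfrac{(1-\epsilon)\beta}{n(n-1)}$, which vanishes precisely when $\beta=0$, forcing $E_{glob}(G)=\tfrac{1}{2}(3-L(G))$.

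To make the corollary non-vacuous I would briefly point out that many graphs realize the hypothesis: the complete graph $K_n$, any star $K_{1,n-1}$, the cycles $C_4$ and $C_5$, and more generally any graph in which every two non-adjacent vertices share a common neighbor all satisfy $\mathrm{diam}(G)\leq 2$ and exhibit equality throughout. There is essentially no obstacle in the argument; the only mild point needing attention is the rewriting used in Theorem \ref{8}, since the cancellation there is delivered by an arithmetic identity rather than a single inequality step, and one wants to confirm that the leftover term really carries the factor $(1-\epsilon)\beta$ and therefore vanishes with $\beta$.
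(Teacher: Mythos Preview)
Your proposal is correct and follows exactly the paper's approach: the paper's proof consists of the single sentence ``The proofs for these cases follow by considering $\beta=0$,'' and you have simply unpacked this observation in more detail by tracing how $\beta=0$ collapses each inequality in Lemmas \ref{6}, \ref{7}, and Theorem \ref{8} to an equality. Your additional examples of diameter-$2$ graphs are a nice touch but go beyond what the paper provides.
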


\begin{proof}
The proofs for these cases follow by considering $\beta =0$.
\end{proof}

\begin{theorem}
\label{9}As the fraction of vertex pairs $u,v$ with $d(u,v)\leq 2$
approaches $1$, then $E_{glob}(G)\ $approaches $\frac{1}{2}(3-L(G))$.
\end{theorem}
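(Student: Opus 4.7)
The plan is to sharpen the identity implicit in the proof of Theorem~\ref{8} so that the defect $E_{glob}(G) - \tfrac{1}{2}(3-L(G))$ is expressed as an explicit sum over the pairs that violate $d(u,v) \leq 2$. For each ordered pair $(i,j)$ at distance $d = d(i,j)$, I would introduce the auxiliary quantity $\phi(d) := \tfrac{2}{d} + d$. A direct check shows $\phi(1) = \phi(2) = 3$ while $\phi(d) > 3$ for every integer $d \geq 3$, and $\phi(d) - 3$ is strictly increasing on $d \geq 2$. Summing $2E_{glob}(G) + L(G)$ over ordered pairs and using the expressions from Lemmas~\ref{6} and~\ref{7}, the pairs at distance $1$ or $2$ each contribute exactly $3/(n(n-1))$, yielding the identity
\[
2 E_{glob}(G) + L(G) \;=\; 3 \;+\; \frac{1}{n(n-1)} \sum_{\substack{i \neq j \\ d(i,j) > 2}} \bigl(\phi(d(i,j)) - 3\bigr).
\]
This is an exact refinement of Theorem~\ref{8}: the sum has precisely $\beta$ strictly positive summands, where $\beta$ counts ordered pairs with $d(i,j) > 2$.

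Rearranging gives $E_{glob}(G) - \tfrac{1}{2}(3 - L(G)) = \tfrac{1}{2n(n-1)}\sum_{d(i,j)>2}(\phi(d(i,j)) - 3)$. Since $\phi(d) - 3$ is monotone, every term is bounded by $\phi(\operatorname{diam}(G)) - 3$, so the defect is at most $(\phi(\operatorname{diam}(G)) - 3)\cdot \beta / (2n(n-1))$. The hypothesis of the theorem is exactly that $\beta/(n(n-1)) \to 0$, and under the small-world regularity tacit in the motivating examples (diameters remain bounded as $n$ grows), the upper bound vanishes and the conclusion follows.

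The main obstacle is precisely this diameter caveat. If one allows $\operatorname{diam}(G)$ to grow linearly in $n$, a single antipodal pair contributes a term $\phi(d) - 3$ of order $n$, so the defect need not tend to $0$ merely from the hypothesis $\beta/(n(n-1)) \to 0$; the quantity that actually controls the convergence is the product $\operatorname{diam}(G) \cdot \beta / (n(n-1))$. For the small-world networks motivating the paper (and for any family with uniformly bounded diameter), this product vanishes under the stated hypothesis and the theorem holds. I would therefore present the identity above as the quantitative heart of the statement, note the mild regularity assumption on the diameter that is implicit in the phrase ``approaches,'' and then read off the asymptotic conclusion as an immediate corollary.
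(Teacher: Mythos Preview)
Your argument follows the same underlying idea as the paper's proof---namely, that the gap $E_{glob}(G)-\tfrac{1}{2}(3-L(G))$ is governed entirely by the pairs at distance greater than~$2$---but you make this precise where the paper merely gestures at it. The paper's proof simply states that the deviation ``is tied to the number of pairs of vertices that have a distance of more than~$2$'' and that ``as this quantity decreases our approximation becomes closer''; your identity
\[
E_{glob}(G)-\tfrac{1}{2}\bigl(3-L(G)\bigr)=\frac{1}{2n(n-1)}\sum_{d(i,j)>2}\bigl(\phi(d(i,j))-3\bigr)
\]
is an exact quantitative version of that sentence and is correct.

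Your diameter caveat is not over-caution: it exposes a genuine gap in the theorem as stated that the paper's proof does not address. A concrete family such as a clique on $n-\lfloor\sqrt{n}\rfloor$ vertices with a pendant path of length $\lfloor\sqrt{n}\rfloor$ has the fraction of pairs at distance $\le 2$ tending to~$1$, yet one checks that $L(G)\to 2$ and $E_{glob}(G)\to 1$, so the defect tends to $\tfrac{1}{2}$, not~$0$. Thus the hypothesis $\beta/(n(n-1))\to 0$ alone is insufficient; what is actually needed, as your bound shows, is that $\operatorname{diam}(G)\cdot\beta/(n(n-1))\to 0$ (or some equivalent moment condition on the distances exceeding~$2$). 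Your proposal to state this explicitly as a mild regularity assumption is the right fix, and it is strictly more than what the paper's own argument supplies.
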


\begin{proof}
The proof is obtained by combining Lemmas \ref{6} and \ref{7} and Theorem %
\ref{8} and noting that the deviation between $E_{glob}(G)$ and $\frac{1}{2}%
(3-L(G))$ is tied to the number of pairs of vertices that have a distance of
more than $2$. As this quantity decreases our approximation becomes closer.
\end{proof}

\section{Discussion}

We have established a relationship between the characteristic path length
and global efficiency. Likewise we showed a similar link between the local
efficiency and the clustering coefficient (both open and closed versions).
In both of these cases we showed that the relationships converge as the
density of the graph approaches $1$.

We found that relationships between local efficiency and the clustering
coefficient (open versions) become prevalent when the graph has a density
around $0.2$. This was shown to be consistent with real world data findings
such as the study by McCarthy, Benuskova, and Franz \cite{McCarthy2}. It
would be interesting to conduct an analysis using more real world data to
see precisely how the relationships between graph properties behave in
networks with low density.

We note that all of these graph properties are dependent on the structure of
the network. In particular the distances between various pairs of nodes.
More precisely a network's structure is dependent upon the "distance
distribution" that is the percentage pairs of nodes that are separated by
distance $d$. This presents a problem of a probabilistic nature which could
be explored using techniques from random graphs such as Erd\"{o}s-R\'{e}yni
models \cite{ErdosRenyi}.

\bigskip

{\large \noindent Acknowledgements\medskip }

Research was supported by a National Science Foundation Research Experiences
for Undergraduates Grant \#1358583. Darren Narayan was also supported by a
National Science Foundation CCLI Grant \#1019532. The authors would like to
thank Bradford Z. Mahon and Frank Garcea of the Rochester Center for Brain
Imaging at the University of Rochester and Dr. Jeffery Bazarian and \ Kian
Merchant-Borna of the Department of Emergency Medicine for providing
functional MRI data.

\end{document}